\title{Complete subgraphs in multipartite graphs}
\author{\textsc{Florian Pfender}
\\
\normalsize Universit\"at Rostock, Institut f\"ur Mathematik\\
\normalsize D-18051 Rostock, Germany\\
\normalsize \texttt{Florian.Pfender@uni-rostock.de}
\date{}}
\newcommand{\cP}{{\mathcal P}}
\newtheorem{theorem}{Theorem}
\newtheorem{lemma}[theorem]{Lemma}
\newtheorem{corollary}[theorem]{Corollary}
\begin{document}

\maketitle
\begin{abstract} 
Tur\'an's Theorem states that every graph of a certain edge density contains a complete
graph $K^k$ and describes the unique extremal graphs. We give a similar Theorem for
$\ell$-partite graphs. For large $\ell$, we find the minimal edge density $d^k_\ell$,
such that every $\ell$-partite graph whose parts have pairwise edge density greater than
$d^k_\ell$ contains a $K^k$. It turns out that
$d^k_\ell=\frac{k-2}{k-1}$ for large enough $\ell$. We also describe the 
structure of the extremal graphs.
\end{abstract}

\section{Introduction}
All graphs in this note are simple, and we follow the notation of~\cite{D}.
Let $G$ be an $\ell$-partite graph on finite sets $V_1,V_2,\ldots V_\ell$. 
For a vertex $x\in V(G)$, let $d(x):=|N(x)|$. 
The density between two parts is defined as 
$$
d_{ij}:=d(V_i,V_j):=\frac{\| G[V_i\cup V_j]\| }{|V_i||V_j|}.
$$
For a graph $H$ with $|H|\ge \ell$, let $d_\ell(H)$ be the minimum number such that every 
$\ell$-partite graph with $\min_{i<j}d_{ij}>d_\ell(H)$ 
contains a copy of $H$. Clearly, $d_\ell(H)$ is monotone decreasing in $\ell$.
In~\cite{BSTT}, Bondy at al. study the quantity $d_\ell(H)$, and in particular 
$d^3_\ell:=d_\ell(K^3)$, i.e. the values for the complete graph on three vertices, the triangle.
Their main results about triangles can be written as follows.
\begin{theorem} \cite{BSTT}\label{bondy0}
\begin{enumerate}
\item $d^3_3=\tau\approx 0.618$, the golden ration, and
\item $d^3_\omega$ exists and $d^3_\omega=\frac{1}{2}$.
\end{enumerate}
\end{theorem}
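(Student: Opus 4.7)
The plan is to establish matching lower and upper bounds for each of the two assertions.

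For $d^3_3=\tau$, the extremal construction is a weighted blow-up of the $5$-cycle. Since $C_5$ is triangle-free and $3$-chromatic, there is a proper $3$-colouring of its vertex set, say $\{v_0,v_2\},\{v_1,v_3\},\{v_4\}$, which assigns each $v_i$ to one of the three parts $V_1,V_2,V_3$. Blowing up each $v_i$ into an independent set $W_i$ and placing a complete bipartite graph between $W_i$ and $W_j$ whenever $v_iv_j\in E(C_5)$ yields a triangle-free $3$-partite graph; solving for weights $|W_i|$ that equalise the three pairwise densities reduces to the quadratic $x^2-3x+1=0$, whose relevant root makes every density equal to $\tau$. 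For the matching upper bound I would pick a vertex $v\in V_3$ extremising a suitable weighted combination of $|N(v)\cap V_1|$ and $|N(v)\cap V_2|$; triangle-freeness forces $N(v)\cap V_1$ and $N(v)\cap V_2$ to span no edges, so $d_{12}$ is bounded by the complement of a bilinear expression in $d_{13}$ and $d_{23}$, and the identity $\tau^2+\tau=1$ emerges as the tight case after optimising.

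For $d^3_\omega=\tfrac12$, the lower bound is a uniform construction that works for every $\ell$: split each $V_i$ into two equal halves $A_i\cup B_i$ and, whenever $i\neq j$, join $A_i$ completely to $B_j$ and $B_i$ completely to $A_j$. The resulting graph is globally bipartite with sides $\bigcup_i A_i$ and $\bigcup_i B_i$, hence triangle-free, and every pairwise density equals $\tfrac12$, which shows $d^3_\ell\ge\tfrac12$ for every $\ell$. For the upper bound, fix $\epsilon>0$ and assume for contradiction that $G$ is a triangle-free $\ell$-partite graph with all $d_{ij}>\tfrac12+\epsilon$. For each vertex $v$, the neighbourhoods $N(v)\cap V_i$ across the remaining parts span no edges (an edge between two of them would close a triangle through $v$), so each $v$ witnesses many upper bounds of the form $d_{ij}\le 1-d_i(v)d_j(v)$, where $d_i(v)=|N(v)\cap V_i|/|V_i|$. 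Averaging these bounds over $v$, then iterating the selection of $v$ so as to pass to increasingly well-structured subsystems of many parts, should drive the ceiling on typical density down from the value $\tau$ furnished by the triple-wise bound of Part 1 towards $\tfrac12$; the construction shows this is tight.

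The main obstacle in both parts is the upper bound. For Part 1 the challenge is choosing the extremising vertex and the weighting so that the bilinear inequality becomes tight exactly at $\tau$; for Part 2 it is forcing the limiting threshold down from $\tau$ all the way to $\tfrac12$, which a single application of the neighbourhood argument cannot accomplish and which I expect requires either a careful iterative scheme exploiting the freedom to choose among $\ell$ parts, or passing to a graphon limit and invoking compactness.
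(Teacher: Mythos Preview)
This theorem is not proved in the paper; it is quoted from Bondy, Shen, Thomass\'e and Thomassen~\cite{BSTT}. The paper does, however, record the two intermediate results that drive the BSTT argument, so your sketch can be compared against those.

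Your lower-bound constructions---the weighted $C_5$ blow-up for Part~1 and the global bipartition $\bigcup A_i$ versus $\bigcup B_i$ for Part~2---are the standard ones and are correct.

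For the upper bound in Part~1, the paper states (as Theorem~\ref{bondy2}) the sharp criterion BSTT actually prove: a tripartite graph with $d_{ij}d_{ik}+d_{jk}>1$ for every ordering of $\{i,j,k\}$ contains a triangle. Since $\tau^2+\tau=1$, all three densities exceeding $\tau$ triggers the hypothesis, and $d^3_3\le\tau$ follows at once. Your outline gestures toward this bilinear relation but does not isolate it: choosing a single vertex $v\in V_3$ yields only $d_{12}\le 1-\alpha\beta$ for that vertex's normalised degrees $\alpha=|N(v)\cap V_1|/|V_1|$ and $\beta=|N(v)\cap V_2|/|V_2|$, and the remaining work---selecting $v$ so that $\alpha\beta\ge d_{13}d_{23}$---is exactly the content of Theorem~\ref{bondy2}, which you have not supplied.

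For Part~2 your upper bound is explicitly left as a hope (``should drive the ceiling down'', ``I expect requires either a careful iterative scheme\ldots or passing to a graphon limit''). The BSTT route, recorded in the paper as Lemma~\ref{bondy'}, is far more direct and needs neither iteration nor compactness: a $4$-partite graph with $|V_1|=1$ and all six pairwise densities strictly above $\tfrac12$ already contains a triangle. Given this, if every density in an $\ell$-partite $G$ exceeds $\tfrac12+\epsilon$, then averaging $\sum_{i\ge 2}|N(v)\cap V_i|/|V_i|$ over $v\in V_1$ produces a vertex with relative degree $>\tfrac12$ into more than $2(\ell-1)\epsilon$ of the other parts; once this count reaches $3$, that vertex together with three such parts satisfies Lemma~\ref{bondy'}. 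Thus $d^3_\ell\le\tfrac12+\epsilon$ whenever $\ell>1+1/\epsilon$, and $d^3_\omega=\tfrac12$. Your single-vertex inequality $d_{ij}\le 1-d_i(v)d_j(v)$ is the germ of Lemma~\ref{bondy'}, but the key idea you are missing is to treat a well-chosen vertex as a singleton part and apply a clean $4$-partite statement once, rather than to iterate.
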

They go on and show that $d^3_4\ge 0.51$ and speculate that  $d^3_\ell>\frac{1}{2}$
for all finite $\ell$. We will show that this is false.
In fact, $d^3_\ell=\frac{1}{2}$ for $\ell\ge 13$ as we will proof in Section~\ref{triangles}.
In Section~\ref{complete}, we will extend the proof ideas to show that
$d^k_\ell:=d_\ell(K^k)=\frac{k-2}{k-1}$ for large enough $\ell$.

In order to state our results, we need to define classes
$\mathcal{G}^k_\ell$ of extremal graphs. 
We will do this properly in Section~\ref{extreme}. Our main result is the following theorem.


\begin{theorem}\label{main2}
Let $\ell$ be large enough and let $G=(V_1\cup V_2\cup \ldots \cup V_\ell,E)$
be an $\ell$-partite graph, such that the pairwise edge densities
$$d(V_i,V_j):=\frac{\| G[V_i\cup V_j]\| }{|V_i||V_j|}\ge \frac{k-2}{k-1}\mbox{ for } i\ne j.$$
Then $G$ contains a $K^k$ or $G$ is isomorphic to a graph in $\mathcal{G}^k_\ell$.
\end{theorem}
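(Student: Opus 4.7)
\medskip
\noindent\emph{Proof proposal.} I would attempt Theorem~\ref{main2} by induction on $k$, the base case $k=3$ being the content of Section~\ref{triangles}. For the inductive step, assume the statement for $k-1$ and let $G$ satisfy the hypotheses of Theorem~\ref{main2} with no copy of $K^k$. The strategy is to apply the inductive hypothesis to a neighborhood $G[N(v)]$ for a carefully chosen vertex $v$, and then patch the resulting local structure into a global one.

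The first and most delicate step is to locate a vertex $v\in V_i$ whose neighborhood, viewed as an $\ell'$-partite graph on the sets $N(v)\cap V_j$ for $j\ne i$, has pairwise densities at least $\frac{k-3}{k-2}$ on all but $O(1)$ pairs of parts. The point is that $\frac{k-3}{k-2}$ is strictly smaller than $\frac{k-2}{k-1}$, so there is room for the density loss incurred when restricting to $N(v)$. A random $v\in V_i$ satisfies $|N(v)\cap V_j|\ge \frac{k-2}{k-1}|V_j|$ in expectation, and a double-counting plus convexity argument should show that for a positive fraction of vertices $v$, the bipartite density inside $N(v)$ between $N(v)\cap V_j$ and $N(v)\cap V_h$ is close to $d_{jh}$ for all but a bounded number of pairs $(j,h)$. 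After discarding the offending parts we obtain an $\ell'$-partite subgraph $G'\subseteq G[N(v)]$ with $\ell'=\ell-O(1)$ and all pairwise densities at least $\frac{k-3}{k-2}$. Since any $K^{k-1}$ in $G'$ extends to a $K^k$ in $G$ by adjoining $v$, $G'$ is $K^{k-1}$-free and the inductive hypothesis forces $G'\in\mathcal{G}^{k-1}_{\ell'}$.

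In the second step I would lift this rigid local structure back to $G$. Repeating the argument with vertices $w$ in other parts produces a family of extremal substructures on each $N(w)$; these should be forced to be mutually compatible, in the sense that the associated $(k-2)$-partite ``color'' partitions agree on overlaps, yielding a single $(k-1)$-coloring of each part $V_j$ whose complete multipartite blow-up is a subgraph of $G$. Tightness of the density threshold $\frac{k-2}{k-1}$ then rules out any extra edges, placing $G$ in the class $\mathcal{G}^k_\ell$ defined in Section~\ref{extreme}.

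The principal obstacle I expect is preserving the density bound under restriction to $N(v)$: the induction is applied with no slack, so even a handful of part-pairs on which $G[N(v)]$ falls below $\frac{k-3}{k-2}$ could defeat the argument. Circumventing this will likely require either a stability version of the inductive statement that tolerates $O(1)$ defective pairs, or a more global choice of $v$ across several parts simultaneously, and it is here that the hypothesis ``$\ell$ large enough'' is genuinely needed. A secondary subtlety is that $\mathcal{G}^k_\ell$ presumably contains more than just balanced Tur\'an blow-ups (exchange moves, unequal class sizes, or small $K^k$-free perturbations), and the reconstruction step must respect exactly the flexibility encoded in the definition of $\mathcal{G}^k_\ell$.
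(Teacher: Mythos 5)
There is a genuine gap at the heart of your inductive step: the claim that for a positive proportion of vertices $v$ the bipartite densities \emph{inside} $N(v)$ stay close to the original densities $d_{jh}$ (for all but $O(1)$ pairs) is false, and with it the density transfer needed to invoke the case $k-1$. Already in the extremal graphs themselves the loss is maximal: take $G\in\mathcal{G}^k_\ell$ with all $|Y_{i,j}|=\frac{n}{k-1}$ and $v$ in class $Y_{i,s}$; then $N(v)\cap V_j=V_j\setminus Y_{j,s}$ for every $j\ne i$, and the density between $N(v)\cap V_j$ and $N(v)\cap V_h$ is exactly $1-\frac{(k-2)\frac{1}{(k-1)^2}}{\frac{(k-2)^2}{(k-1)^2}}=\frac{k-3}{k-2}$ for \emph{every} pair and \emph{every} choice of $v$. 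So no averaging or convexity argument can push the neighborhood densities above the threshold: in any near-extremal $K^k$-free $G$ they will sit at, or slightly below, $\frac{k-3}{k-2}$, and the shortfall is spread over essentially all pairs rather than confined to $O(1)$ ``defective'' ones. Hence the induction is applied with zero slack, and your proposed remedy (a stability version tolerating a bounded number of bad pairs) does not meet the actual difficulty; what you would need is a version of the theorem for $k-1$ that tolerates densities $\frac{k-3}{k-2}-o(1)$ on \emph{all} pairs, which is strictly stronger than the statement being proved and is not available from the induction as you set it up. A secondary unresolved point is the reconstruction step: even granting that each $G[N(v)]$ lands in $\mathcal{G}^{k-1}_{\ell'}$, the compatibility of the induced $(k-2)$-partitions across different $v$ is asserted rather than argued, and the exceptional, unbalanced parts permitted by the definition of $\mathcal{G}^{k-1}_{\ell'}$ (up to $(k-2)!$ of them, plus deleted edges among them) make the patching nontrivial.

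For comparison, the paper avoids neighborhood induction entirely. It notes that the hypotheses force $\|G\|\ge\left(1-\frac{1}{k-1}-\frac{1}{\ell}\right)\binom{|G|}{2}$ and applies the Erd\H{o}s--Simonovits stability theorem (Theorem~\ref{erd}) to the whole graph, obtaining a $(k-1)$-partite induced subgraph $H$ of large minimum degree covering all but $o(|G|)$ vertices. It then intersects the classes of $H$ with the parts $V_i$, shows these intersections are nearly balanced on most parts, assigns the few exceptional vertices to classes via their neighborhoods, and finishes with an edge count showing that any deviation from the rigid structure would drop some pairwise density below $\frac{k-2}{k-1}$; the equality analysis yields $\mathcal{G}^k_\ell$. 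The only induction-like ingredient is that the base case $k=3$ (Theorem~\ref{main}) is proved separately by a different, more quantitative argument. If you want to salvage your approach, you would first have to prove a robust version of Theorem~\ref{main2} with densities allowed to be $\frac{k-2}{k-1}-\varepsilon$ at the cost of an approximate conclusion; at that point you are essentially reproving a stability theorem, which is exactly the tool the paper imports from \cite{BB} instead.
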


\begin{corollary}
For $\ell$ large enough, $d^k_{\ell}=\frac{k-2}{k-1}$.
\end{corollary}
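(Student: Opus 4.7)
The corollary is a formal consequence of Theorem~\ref{main2} once the two usual matching bounds on $d^k_\ell$ are checked.

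For the upper bound $d^k_\ell\le\frac{k-2}{k-1}$, let $G$ be any $\ell$-partite graph with every pairwise density strictly exceeding $\frac{k-2}{k-1}$. Theorem~\ref{main2} forces either $G\supseteq K^k$ or $G\in\mathcal{G}^k_\ell$. To rule out the second alternative I would invoke the fact, immediate from the definition of $\mathcal{G}^k_\ell$ to be given in Section~\ref{extreme}, that every member of $\mathcal{G}^k_\ell$ has all pairwise densities equal to exactly $\frac{k-2}{k-1}$. The strict inequality then leaves only $G\supseteq K^k$, which is precisely what the definition of $d^k_\ell$ requires.

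For the lower bound $d^k_\ell\ge\frac{k-2}{k-1}$, I would exhibit, for arbitrarily large $\ell$, an $\ell$-partite $K^k$-free graph whose pairwise densities all equal $\frac{k-2}{k-1}$ (or, if the strict-inequality version of the definition demands it, are arbitrarily close from below). The canonical construction is to split each part $V_i$ into $k-1$ equal-sized subparts indexed $1,\ldots,k-1$ and to join $u\in V_i^a$ and $v\in V_j^b$ exactly when $i\ne j$ and $a\ne b$. The proper $(k-1)$-colouring by subpart index shows this graph is $K^k$-free, and a short edge count between $V_i$ and $V_j$ gives density exactly $\frac{k-2}{k-1}$. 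Equivalently, any member of $\mathcal{G}^k_\ell$ would serve, once nonemptiness of $\mathcal{G}^k_\ell$ is established in Section~\ref{extreme}.

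I do not anticipate any substantive obstacle for the corollary itself: both ingredients are essentially built into the definition of $\mathcal{G}^k_\ell$. Once Section~\ref{extreme} is in place, the proof of the corollary amounts to little more than substituting that definition into the dichotomy supplied by Theorem~\ref{main2}; all the real work of the paper lies in Theorem~\ref{main2}.
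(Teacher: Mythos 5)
Your proposal is correct and follows the route the paper intends (the paper leaves the corollary's proof implicit): the upper bound comes from the dichotomy of Theorem~\ref{main2} plus the fact that extremal graphs cannot have all densities strictly above $\frac{k-2}{k-1}$, and the lower bound from the balanced $K^k$-free construction, which is exactly the balanced member of $\bar{\mathcal{G}}^k_\ell$. One small correction: your supporting claim that \emph{every} member of $\mathcal{G}^k_\ell$ has \emph{all} pairwise densities equal to exactly $\frac{k-2}{k-1}$ is false as stated; within the first $(k-1)!$ parts the splits $n_i^s$ may be unequal (and edges there may be deleted), so those pairwise densities can differ from $\frac{k-2}{k-1}$. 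What the definition in Section~\ref{extreme} actually gives, and what suffices for your argument, is that $d_{ij}=\frac{k-2}{k-1}$ whenever one of the two indices exceeds $(k-1)!$; since $\ell>(k-1)!$, every graph in $\mathcal{G}^k_\ell$ has at least one pair of parts with density exactly $\frac{k-2}{k-1}$, which already contradicts the hypothesis of strict inequality and rules out the second alternative of Theorem~\ref{main2}.
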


The bound on $\ell$ one may get out of the proof is fairly large, and we
think 
that the true bound is much smaller.
For triangles ($k=3$), we can give a reasonable bound on $\ell$. We
think that this bound is not sharp, either. 
In fact we would not be surprised if $\ell\ge 5$ turns out to be sufficient.

\begin{theorem}\label{main}
Let $\ell\ge 13$ and let $G=(V_1\cup V_2\cup \ldots \cup V_\ell,E)$
be an $\ell$-partite graph, such that the pairwise edge densities
$$d(V_i,V_j):=\frac{\| G[V_i\cup V_j]\|}{|V_i||V_j|}\ge \frac{1}{2}\mbox{ for } i\ne j.$$
Then $G$ contains a triangle or $G$ is isomorphic to a graph in $\mathcal{G}^3_\ell$.
\end{theorem}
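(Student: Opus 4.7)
The plan is to argue by contradiction: suppose $G$ is triangle-free with all pairwise densities at least $\tfrac12$ and $\ell\ge 13$, and deduce that $G$ must be isomorphic to a graph in $\mathcal{G}^3_\ell$. The starting observation is purely local: for any edge $uv$ with $u\in V_i$, $v\in V_j$ and any third part $V_h$, triangle-freeness forces $N(u)\cap V_h$ and $N(v)\cap V_h$ to be disjoint, so $d_h(u)+d_h(v)\le |V_h|$. Writing $\tilde d_h(x):=|N(x)\cap V_h|/|V_h|$, every edge $uv$ therefore satisfies $\tilde d_h(u)+\tilde d_h(v)\le 1$ for all other parts $V_h$.

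The next step is to combine these pointwise inequalities with the global density bounds $d_{ij}\ge\tfrac12$ to force extremal structure. Summing the local constraint over $uv\in E(V_i,V_j)$ yields bilinear inequalities in the quantities $\sum_{u\in V_i}\tilde d_j(u)\tilde d_h(u)$, and because the density hypothesis is exactly at the threshold where the pointwise inequality must be essentially tight, the values $\tilde d_h(u)$ are forced close to $\{0,1\}$ for almost every $u$. This gives, for each $i$ and each $h\ne i$, a canonical bipartition $V_i=A_i^h\cup B_i^h$ separating vertices of high and low $\tilde d_h$, and these ``local'' partitions should be nearly balanced.

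The key step is to promote these pairwise bipartitions into a single bipartition $V_i=A_i\cup B_i$ that works simultaneously for every $h\ne i$. This is where the hypothesis $\ell\ge 13$ is used essentially. A vertex $u\in V_i$ that behaves as type $A$ relative to some $V_h$ but as type $B$ relative to some $V_{h'}$ forces violations of the triangle-free constraint across a positive fraction of the $\binom{\ell-1}{2}$ remaining pairs $\{h'',h'''\}$ of parts, since its high-degree links in one part combine with the opposite tendencies in another to produce many short cycles that close into triangles. A counting argument using these many constraints, with enough slack once $\ell\ge 13$, rules out any inconsistent vertex. I expect this to be the main obstacle and the precise source of the constant $13$.

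Once the partitions are globally consistent, triangle-freeness localises the edges between $V_i$ and $V_j$ to $A_i\times B_j\cup B_i\times A_j$, and the density lower bounds then force these bipartite slices to be nearly complete. The final task is to verify that the resulting graph matches the description of $\mathcal{G}^3_\ell$ from Section~\ref{extreme}, handling any few deviating vertices either by absorbing them into the extremal description or by producing an explicit triangle from them together with the now well-understood bulk of the graph.
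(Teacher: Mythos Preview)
Your proposal has a genuine gap at the step where you claim the normalised degrees $\tilde d_h(u)$ are forced close to $\{0,1\}$. This is false already for the extremal graphs you are trying to characterise: in a balanced member of $\mathcal G^3_\ell$ each $V_i$ splits as $A_i\cup B_i$ with $|A_i|=|B_i|=n/2$, and every vertex $u\in A_i$ has $N(u)\cap V_h=B_h$, so $\tilde d_h(u)=\tfrac12$ for every $u$ and every $h$. The pointwise constraint $\tilde d_h(u)+\tilde d_h(v)\le 1$ is met with equality everywhere, but with both summands equal to $\tfrac12$, not with one near $0$ and one near $1$. Consequently the bipartitions $V_i=A_i^h\cup B_i^h$ ``separating vertices of high and low $\tilde d_h$'' that you want to build simply do not exist in the target graphs, and nothing downstream of that step can proceed. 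The real dichotomy in the extremal graphs is not high versus low degree into $V_h$, but \emph{which half} of $V_h$ a vertex sees, and this cannot be read off from scalar degree statistics.

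The paper obtains the two colour classes by a different mechanism. An averaging argument over the edge weight $s(xy)=d(x)-d_j(x)+d(y)-d_i(y)$, combined with the Bondy--Shen--Thomass\'e--Thomassen lemmas bounding independent sets in dense triangle-free tripartite graphs, locates a single edge $xy$ whose endpoints together see almost all of $V_3\cup\cdots\cup V_\ell$. The restricted neighbourhoods $N'(x)$ and $N'(y)$ then \emph{are} the two sides of the global bipartition, and an explicit edge count on $G'=G[V_3\cup\cdots\cup V_\ell]$ (and a further restriction $G''$) forces the leftover set and the set $Z'$ of ambiguous vertices to vanish. The threshold $\ell\ge 13$ emerges from concrete numerical inequalities in this count rather than from an abstract consistency argument of the kind you sketch.
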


\begin{corollary}
$d^3_{13}=\frac{1}{2}$.
\end{corollary}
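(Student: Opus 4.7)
The plan is to argue by contradiction: assume $G$ is triangle-free with pairwise densities $\ge \tfrac{1}{2}$ and show $G \in \mathcal{G}^3_\ell$. The canonical extremal construction splits each $V_i$ into halves $A_i, B_i$ and places complete bipartite graphs between $A_i$ and $B_j$ (and between $B_i$ and $A_j$) for all $i \ne j$; this is bipartite, hence triangle-free, and achieves every $d_{ij} = \tfrac{1}{2}$. So $\mathcal{G}^3_\ell$ (defined in Section~\ref{extreme}) should essentially consist of such bipartitioned graphs, and the proof must recover such a global bipartition from the density hypothesis.

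First, I would encode triangle-freeness pointwise. Writing $\alpha_k(v) := |N(v) \cap V_k|/|V_k|$, for every edge $uv$ with $u \in V_i$, $v \in V_j$ and every $k \notin \{i,j\}$, the neighborhoods $N(u) \cap V_k$ and $N(v) \cap V_k$ must be disjoint, so
$$\alpha_k(u) + \alpha_k(v) \le 1.$$
Summing this over edges of $G[V_i \cup V_j]$ for each third part $V_k$ and comparing with the identities $\sum_{w \in V_j} \alpha_i(w) = d_{ij}|V_j| \ge |V_j|/2$, one obtains an integral inequality whose tight case forces $d_{ij} = \tfrac{1}{2}$ on every pair and $\alpha_k(u) + \alpha_k(v) = 1$ on almost every edge.

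Second, I would upgrade this averaged tightness to a pointwise stability statement: outside a small exceptional set, every vertex $v$ has a well-defined \emph{type} $\tau(v) \in \{0,1\}^{\ell}$ with $\tau(v)_k = \lfloor \alpha_k(v) + \tfrac{1}{2}\rfloor$, and adjacent vertices carry complementary types on their shared coordinates. The requirement that every pair of parts obey this complementarity collapses the set of types to just two, yielding a global bipartition $V(G) = A \cup B$ with $|A \cap V_i| = |B \cap V_i| = |V_i|/2$ that certifies $G \in \mathcal{G}^3_\ell$. The bound $\ell \ge 13$ should emerge precisely here: each exceptional vertex forces a density deficit of order $1/\ell$ in each of $\Omega(\ell)$ pairs, and once $\ell \ge 13$ this deficit already violates the hypothesis $d_{ij} \ge \tfrac{1}{2}$.

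The main obstacle is step two: converting averaged tightness into pointwise structure with constants sharp enough to hit $\ell = 13$ rather than some much larger value. The stability argument must be calibrated precisely to the exact definition of $\mathcal{G}^3_\ell$ so that it tolerates the deviations the extremal class allows, but no more. Achieving this quantitative fit, rather than merely a qualitative stability statement, will be the technical crux.
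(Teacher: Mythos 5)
There is a genuine gap, and it sits exactly where your plan claims the theorem should fall out. The local inequality $\alpha_k(u)+\alpha_k(v)\le 1$ on edges is correct but far too weak: summed over the edges of $G[V_i\cup V_j]$ it becomes $\sum_{u\in V_i}d_j(u)\alpha_k(u)+\sum_{v\in V_j}d_i(v)\alpha_k(v)\le d_{ij}|V_i||V_j|$, a degree-weighted sum that cannot be compared with $\sum_w\alpha_k(w)\ge |V_k$-type averages without further information, and it certainly does not force $d_{ij}=\tfrac12$ or near-equality on most edges. A sanity check shows no purely local argument of this kind can work: the same inequalities hold for every $\ell$, yet $d^3_3=\tau\approx 0.618$ and $d^3_4\ge 0.51$, so densities strictly above $\tfrac12$ do not force triangles for small $\ell$. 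Any proof must use $\ell\ge 13$ quantitatively, and your only appeal to $\ell$ (``each exceptional vertex forces a density deficit of order $1/\ell$'') is not an argument --- a single exceptional vertex changes a pairwise density by $O(1/n)$, not $O(1/\ell)$. What is actually needed, and what the paper supplies, is (i) the structural input from Lemma~\ref{bondy} and Theorem~\ref{bondy2} (a triangle-free neighborhood can exceed half of at most two parts, and $d_j(x)d_k(x)\le\tfrac12 n^2$), (ii) a second-moment averaging over the edge weight $s(xy)=d(x)-d_j(x)+d(y)-d_i(y)$ to locate one edge $xy$ whose two neighborhoods jointly cover almost all of $G$, and (iii) an explicit edge count relative to $N'(x),N'(y)$ with error sets $W',Z'$, whose inequalities (e.g.\ $|W'|\le\frac{2n}{\ell-1}\le\frac n6$ and the final coefficient $\frac{\ell-1}{4}+\frac{7}{3\ell-15}+\frac{2}{\ell-1}-\frac{\ell-6}{2}\le 0$) are precisely where $\ell\ge 13$ enters. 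None of this machinery is present or replaced in your sketch.

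A second, independent problem is that your target structure mischaracterizes the extremal family. You aim to recover a global bipartition with $|A\cap V_i|=|B\cap V_i|=|V_i|/2$ in every part and all densities exactly $\tfrac12$, but $\mathcal{G}^3_\ell$ as defined in Section~\ref{extreme} allows up to two parts with arbitrarily unbalanced splits (and even deletion of some edges between those two parts), so extremal graphs can have $d_{12}>\tfrac12$ and need not be balanced everywhere. A stability argument ``calibrated'' to the all-balanced picture would wrongly exclude these graphs; the paper's proof instead isolates the at most two exceptional parts ($V_3\cup V_4$ in its notation, after the edge $xy$ is fixed) and proves balance only for the remaining ones. Finally, note that the corollary also needs the lower bound $d^3_{13}\ge\tfrac12$, which your construction paragraph does give, so the gap is entirely on the upper-bound/stability side.
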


\section{Extremal graphs}\label{extreme}
For $\ell\ge (k-1)!$, a graph $G$ is in $\bar{\mathcal{G}}^k_\ell$, if it
can be constructed as follows. 
\begin{eqnarray*}
V(G)&=&\{ (i,s,t): 1\le i\le \ell, ~1\le s\le k-1, ~1\le t\le n_i^s\} ,\\
E(G)&=&\{ (i,s,t)(i',s',t'): i\ne i',~s\ne s'\} ,
\end{eqnarray*}
where for $\{ \pi_1,\pi_2,\ldots,\pi_{(k-1)!}\}$ being the set of all
permutations of the set $\{ 1,\ldots,k-1\}$, 
\begin{eqnarray*} 
 n_i^{\pi_i(1)}\ge n_i^{\pi_i(2)}\ge\ldots\ge n_i^{\pi_i(k-1)}
&&\mbox{ for } 1\le i\le (k-1)!,  \\
n_i^1 =n_i^2=\ldots=n_i^{k-1} &&\mbox{ for } (k-1)!< i\le
\ell,\mbox{ and} \\
\sum n_i^s>0 &&\mbox{ for } 1\le i\le\ell.
\end{eqnarray*} 
Let $\mathcal{G}^k_\ell$ be the class of graphs which can be obtained from graphs in 
$\bar{\mathcal{G}}_\ell$ by deletion of some edges
in $\{ (i,s,k)(i',s',k'): s\ne s'\land 1\le i<i'\le (k-1)!\}$.

It is easy to see that all graphs in $\mathcal{G}^k_\ell$
are $\ell$-partite and that $\mathcal{G}^k_\ell$ contains graphs with $d_{ij}\ge \frac{k-2}{k-1}$ 
(and  $d_{ij}= \frac{k-2}{k-1}$ for $j>(k-1)!$ for all graphs in $\mathcal{G}^k_\ell$). 

For $k=3$, the density condition is fulfilled for all graphs in 
$\bar{\mathcal{G}}^3_\ell$, and for all graphs in  $\mathcal{G}^3_\ell$ which have $d_{12}\ge \frac{1}{2}$.

For $k>3$, this description is not a full characterization of the extremal graphs in the problem. We would need some extra conditions on the $n_i^s$ to make sure that all graphs in  $\bar{\mathcal{G}}^k_\ell$ fulfill the density conditions.

\section{Theorem~\ref{main}---triangles}\label{triangles}
In this section we prove Theorem~\ref{main}.
We will start with a few useful lemmas.
An important lemma for the study of $d^3_\omega$ is the following.
\begin{lemma} \cite{BSTT}\label{bondy'}
Let $G=(V_1\cup V_2\cup V_3\cup V_4,E)$ be a $4$-partite
graph with $|V_1|=1$, such that the pairwise edge densities
$d(V_i,V_j)>\frac{1}{2}\mbox{ for } i\ne j.$
Then $G$ contains a triangle.
\end{lemma}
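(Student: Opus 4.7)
The natural approach is proof by contradiction: assume $G$ is triangle-free. Let $v$ denote the unique vertex of $V_1$ and set $A_i := N(v) \cap V_i$ and $B_i := V_i \setminus A_i$ for $i \in \{2,3,4\}$; the hypothesis $d(V_1, V_i) > 1/2$ gives $|A_i| > |V_i|/2$. Any edge between $a_i \in A_i$ and $a_j \in A_j$ with $i \ne j$ would close a triangle through $v$, so the three sets $A_2, A_3, A_4$ are pairwise joined by no edges. Consequently $e(V_i, V_j) \le |V_i||V_j| - |A_i||A_j|$, and the density bound $d(V_i, V_j) > 1/2$ yields the first quantitative consequence $|A_i||A_j| < |V_i||V_j|/2$ for every pair $i \ne j$.

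The next step is a ``rectangle rule'': triangle-freeness forces, for every $u \in V_1 \cup V_2$, the bipartite graph $G[N(u) \cap V_3,\, N(u) \cap V_4]$ to be empty, since any edge there would complete a triangle through $u$. Writing $H := G[V_3, V_4]$ and $R_u := (N(u) \cap V_3) \times (N(u) \cap V_4)$, the union $\bigcup_{u \in V_1 \cup V_2} R_u$ lies inside the non-edge set of $H$, which has size strictly less than $|V_3||V_4|/2$ by $d(V_3, V_4) > 1/2$. The rectangles $R_u$ are localized: $R_v = A_3 \times A_4$ sits in the ``$AA$-corner''; for $u \in A_2$ the rectangle $R_u$ lies inside $B_3 \times B_4$ (since $A_2$ has no edges into $A_3 \cup A_4$); and for $u \in B_2$ the rectangle $R_u$ can touch any of the four corners of $V_3 \times V_4 = (A_3 \cup B_3) \times (A_4 \cup B_4)$.

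To reach a contradiction I would lower bound $\bigl|\bigcup_{u} R_u\bigr|$ so that it exceeds $|V_3||V_4|/2$. The $v$-contribution alone covers the whole $AA$-corner, of area $|A_3||A_4| > |V_3||V_4|/4$. The remaining coverage must come from the other three corners. The $A_2$-rectangles lie disjointly from $R_v$ inside $B_3 \times B_4$, and their aggregate area $\sum_{u \in A_2} |N(u) \cap V_3| \cdot |N(u) \cap V_4|$ is controlled via the row/column sums $e(A_2, V_3) = e(A_2, B_3)$ and $e(A_2, V_4) = e(A_2, B_4)$, each bounded below using $d(V_2, V_3), d(V_2, V_4) > 1/2$ after subtracting the trivially bounded $B_2$-contributions. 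Rectangles with $u \in B_2$ then fill in the cross corners $A_3 \times B_4$ and $B_3 \times A_4$, since a $b \in B_2$ may have neighbors in $A_3$ or $A_4$.

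The main obstacle is the averaging step that upgrades ``total rectangle area'' to ``area of the union'', since rectangles can overlap and no single $R_u$ alone exceeds the threshold $|V_3||V_4|/2$. One must analyze coverage in each of the four corners separately and combine via a Cauchy--Schwarz (or rearrangement) argument applied to the degree sequences $\{|N(u) \cap V_3|\}_{u \in A_2}$ and $\{|N(u) \cap V_4|\}_{u \in A_2}$. The slack needed to push past $|V_3||V_4|/2$ ultimately comes from the strict inequalities $|A_i||A_j| < |V_i||V_j|/2$ together with $|A_i| > |V_i|/2$: these pin down the $|A_i|/|V_i|$ to a narrow range that forces enough edges outside the $A$-sets to keep the $B$-rectangles large enough that their union, added to $R_v$, covers more than half of $V_3 \times V_4$.
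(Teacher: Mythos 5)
Your opening moves are fine and match how any proof of this lemma must begin: with $v$ the vertex of $V_1$ and $A_i=N(v)\cap V_i$, triangle-freeness makes $A_2,A_3,A_4$ pairwise non-adjacent, $|A_i|>\tfrac12|V_i|$, and hence $|A_i||A_j|<\tfrac12|V_i||V_j|$; likewise the ``rectangle rule'' that $(N(u)\cap V_3)\times(N(u)\cap V_4)$ consists of non-edges for every $u\in V_1\cup V_2$ is correct, and the plan of showing these non-edges exceed $\tfrac12|V_3||V_4|$ is a legitimate target. (The paper itself gives no proof here; the lemma is quoted from [BSTT], so you are being compared against that argument rather than a proof in this text.)

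The gap is at exactly the step you flag as the ``main obstacle,'' and the mechanism you propose for it does not work. You want to lower bound the aggregate (and then the union) of the rectangle areas $\sum_{u\in A_2}|N(u)\cap V_3|\cdot|N(u)\cap V_4|$ using only the marginal counts $e(A_2,B_3)$ and $e(A_2,B_4)$, ``combined via Cauchy--Schwarz or rearrangement.'' But a sum of products is not bounded below by its marginals: the minimum subject to prescribed row sums is attained by anti-correlated supports, e.g.\ half of $A_2$ sending all its edges into $B_3$ and none into $B_4$, the other half the reverse, in which case every rectangle $R_u$, $u\in A_2$, is empty while both edge counts are as large as the density conditions demand. (Cauchy--Schwarz gives inequalities in the wrong direction here.) A short computation shows this degenerate splitting is consistent with your constraints whenever $|A_2|/|V_2|$ is close to $\tfrac12$ --- note that $\alpha_i>\tfrac12$ and $\alpha_i\alpha_j<\tfrac12$ confine the ratios only to an interval like $(\tfrac12,\tfrac1{\sqrt2})$, not to a ``narrow range'' --- so the slack you invoke does not exist at this level of generality. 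Ruling out the split is the actual content of the lemma: one must play off all three densities at once, e.g.\ observe that if many $A_2$-vertices have no neighbors in $B_4$ then $d_{24}>\tfrac12$ forces $B_2$ (which is small) to carry many $V_4$-edges, and then exploit the rectangles of $B_2$-vertices in the mixed corners $A_3\times B_4$ and $B_3\times A_4$ together with $A_3\times A_4$ to overload the non-edge count between $V_3$ and $V_4$. None of this case analysis is carried out in your sketch, and without it the final inequality ``union of rectangles $>\tfrac12|V_3||V_4|$'' is unsupported; as it stands the proposal is an outline of a plausible strategy, not a proof.
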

With the same proof one gets a slightly stronger result which we will use in our proof. 
In most cases occurring later, $X$ will be the neighborhood of a vertex, and the Lemma will be used
to bound the degree of the vertex.
\begin{lemma}\label{bondy}
Let $G=(V_1\cup V_2\cup V_3,E)$ be a $3$-partite
graph and $X$ an independent set, such that the pairwise edge densities
$d(V_i,V_j)\ge \frac{1}{2}$ for $ i\ne j$ and $|X\cap V_i|\ge
\frac{1}{2}|V_i|$ for $1\le i\le 3$, with a strict
inequality for at least two of the six inequalities.
Then $G$ contains a triangle.
\end{lemma}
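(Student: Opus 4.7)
The plan is to prove Lemma~\ref{bondy} by contradiction, following Bondy et al.'s scheme for Lemma~\ref{bondy'}, with the independent set $X$ replacing the singleton part $V_1$ there. Setting $x_i = |X \cap V_i|$ and $Y_i = V_i \setminus (X \cap V_i)$, independence of $X$ contributes $x_i x_j$ non-edges to $G[V_i \cup V_j]$, and $d(V_i, V_j) \ge \tfrac12$ caps non-edges there at $\tfrac12|V_i||V_j|$, giving
$$
x_i x_j \le \tfrac12 |V_i||V_j| \quad\text{for } i \ne j,
$$
strictly whenever the corresponding density hypothesis is strict.

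Now assume for contradiction that $G$ is triangle-free. For each $w \in X \cap V_k$ (with $\{i,j,k\} = \{1,2,3\}$), independence of $X$ forces $N(w) \subseteq Y_i \cup Y_j$, and triangle-freeness forces $N(w) \cap Y_i$ and $N(w) \cap Y_j$ to span no edges; hence each such $w$ witnesses $|N(w) \cap V_i| \cdot |N(w) \cap V_j|$ non-edges inside $Y_i \times Y_j$, disjoint from the $(X \cap V_i) \times (X \cap V_j)$ non-edges above.

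The argument is then closed by double-counting cherries centred in $X \cap V_k$. An upper bound is
$$
\sum_{w \in X \cap V_k} |N(w) \cap V_i| \cdot |N(w) \cap V_j| \le \bigl(\tfrac12|V_i||V_j| - x_i x_j\bigr) x_k,
$$
since every non-edge in $Y_i \times Y_j$ (of which there are at most $\tfrac12|V_i||V_j| - x_i x_j$) has at most $x_k$ common neighbours in $X \cap V_k$. For a matching lower bound, one uses $\sum_{w \in X \cap V_k} |N(w) \cap V_i| = e(X \cap V_k, Y_i) \ge |V_i|\bigl(x_k - \tfrac12|V_k|\bigr)$ (which follows from $d(V_i,V_k) \ge \tfrac12$ together with $e(Y_k, V_i) \le |Y_k||V_i|$) and its analogue for $V_j$, combined with a convexity argument exploiting the tight cap $|N(w) \cap V_i| \le |Y_i|$ for $w \in X \cap V_k$. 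Cycling through $k = 1,2,3$ yields a system of inequalities whose tight case is exactly the ``bipartite double cover'' extremal graph of Section~\ref{extreme}, with every one of the six hypothesised inequalities holding as equality.

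The main obstacle I anticipate is precisely the lower-bounding step: naive Cauchy--Schwarz is not sharp enough, and a more delicate convexity argument is needed. Granting such a bound, the two strict hypotheses inject slack in different places -- strict density hypotheses tighten the bound $x_i x_j \le \tfrac12|V_i||V_j|$, while strict cardinality hypotheses $|X \cap V_i| > \tfrac12|V_i|$ feed into the lower bound on the cherry count -- so any two of them (reducing by symmetry to three representative sub-cases: two strict densities, two strict cardinalities, or one of each) push the cyclic system off its extremal configuration and produce the required triangle.
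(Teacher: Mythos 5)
Your scheme has a genuine gap at exactly the step you flag as the ``main obstacle,'' and as set up it cannot be closed. The upper bound $\sum_{w\in X\cap V_k}|N(w)\cap V_i|\,|N(w)\cap V_j|\le\bigl(\tfrac12|V_i||V_j|-x_ix_j\bigr)x_k$ is fine, but the matching lower bound you hope to extract from the marginals $\sum_w|N(w)\cap V_i|\ge|V_i|\bigl(x_k-\tfrac12|V_k|\bigr)$ together with the caps $|N(w)\cap V_i|\le|Y_i|$ is simply not there: the minimum of $\sum_w a_wb_w$ subject to prescribed marginals and caps is attained by making the supports of $(a_w)$ and $(b_w)$ disjoint, and it is $0$ unless the marginals force overlap, which here requires roughly $x_k>\tfrac23|V_k|$. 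In the critical regime permitted by the hypotheses --- say both strict inequalities are density inequalities, $d_{ij}=\tfrac12+\varepsilon$, and $|X\cap V_i|=\tfrac12|V_i|$ for all $i$ --- your marginal lower bounds are $O(\varepsilon)$ while the right-hand side stays of order $\tfrac14|V_i||V_j|x_k$, so each of the three inequalities in your ``cyclic system'' is vacuously satisfied and no contradiction follows; the $\varepsilon$ of slack from strict densities is nowhere near enough. The loss enters when you bound $e(Y_k,V_i)\le|Y_k||V_i|$: in the extremal configuration this overcounts by a factor of $2$ (there $e(Y_k,V_i)=\tfrac14|V_k||V_i|$, not $\tfrac12|V_k||V_i|$), and recovering that factor requires exploiting triangle-freeness for the vertices of $Y_k$ as well, i.e.\ constraints linking the three pairs simultaneously --- not just independence of $X$ and the pair densities.

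For comparison: the paper does not reprove this lemma at all; it observes that the proof of Lemma~\ref{bondy'} in \cite{BSTT} goes through verbatim with the independent set $X$ in place of the neighbourhood of the singleton vertex, and that only two strict inequalities are actually used. That argument is genuinely more involved than a single cherry/non-edge double count (it is of the same nature as Theorem~\ref{bondy2}), which is consistent with the fact that your two displayed bounds, while individually tight on the extremal graph of Section~\ref{extreme}, do not interact strongly enough on their own. To salvage your approach you would have to replace the ``granting such a bound'' step with an argument that uses the non-edges created by the neighbourhoods of $Y$-vertices (or otherwise couples the three colour classes), at which point you are essentially redoing the Bondy--Shen--Thomass\'e--Thomassen proof.
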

In order to prove the first part of Theorem~\ref{bondy0}, the authors show a stronger result.
\begin{theorem} \cite{BSTT}\label{bondy2}
Let $G=(V_1\cup V_2\cup V_3,E)$ be a $3$-partite
graph with edge densities
$d_{ij}:=d(V_i,V_j)$, and $d_{ij}d_{ik}+d_{jk}>1$ for $\{i,j,k\}=\{1,2,3\}$.
Then $G$ contains a triangle.
\end{theorem}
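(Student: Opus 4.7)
I would argue by contradiction. Suppose $G$ is triangle-free; the goal is then to show $d_{ki}d_{kj}+d_{ij}\le 1$ for some labelling $\{i,j,k\}=\{1,2,3\}$, contradicting the hypothesis. The starting local observation is clean: for any $v\in V_k$, no edge of $G$ can lie inside $N_i(v)\times N_j(v)$, because such an edge together with $v$ would form a triangle. All $|N_i(v)|\cdot|N_j(v)|$ such pairs are therefore non-edges of the bipartite graph between $V_i$ and $V_j$, which has only $(1-d_{ij})|V_i||V_j|$ non-edges in total, giving the pointwise bound
$$ d_i(v)\,d_j(v)\;\le\;(1-d_{ij})\,|V_i||V_j|. \qquad (\star) $$

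The strategy is to exhibit a \emph{productive} vertex $v^*\in V_k$ (for some labelling) satisfying $d_i(v^*)d_j(v^*)\ge d_{ki}d_{kj}|V_i||V_j|$; fed into $(\star)$ this instantly gives $d_{ki}d_{kj}+d_{ij}\le 1$. The hard step is producing such a $v^*$. A direct average inside a single $V_k$ need not work: if the two neighbour-degree functions $d_i(\cdot)$ and $d_j(\cdot)$ on $V_k$ happen to be negatively correlated, the cherry count $C_k:=\sum_{v\in V_k} d_i(v)d_j(v)$ drops below $d_{ki}d_{kj}|V_i||V_j|\cdot|V_k|$ and no vertex of $V_k$ alone reaches the target.

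My plan is to play the three parts against each other. Counting triples by their number of edges and using triangle-freeness gives the global lower bound $C_1+C_2+C_3\ge(d_{12}+d_{13}+d_{23}-1)|V_1||V_2||V_3|$, while $(\star)$ gives the individual upper bounds $C_k\le(1-d_{ij})|V_1||V_2||V_3|$. I would then produce $v^*$ not by a plain average but by a weighted selection across parts, with weights tuned to some combination of the three products $d_{12}d_{13},d_{12}d_{23},d_{13}d_{23}$, so that at least one of the ratios $C_k/(d_{ki}d_{kj}|V_i||V_j|\cdot|V_k|)$ is forced up to $1$; pigeonhole inside that $V_k$ then yields the productive vertex. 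The main obstacle is sharpness: the pentagonal blow-up with $d_{12}=d_{13}=d_{23}=(\sqrt{5}-1)/2$ is triangle-free and makes all three hypothesised inequalities simultaneously equalities, so every bound used must be tight there. Crude pointwise tricks such as $d_i d_j\ge d_i+d_j-|V_i||V_j|$ lose too much and yield only the weaker conclusion $d_{12}+d_{13}+d_{23}\le 2$, so calibrating the weighting to be exact at the pentagonal extremal is the technically delicate part of the proof.
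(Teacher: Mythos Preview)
The paper does not give a proof of this theorem: it is quoted from \cite{BSTT} and used as an input to the proofs of Corollary~\ref{bondy3} and Theorem~\ref{main}. There is therefore no ``paper's own proof'' to compare your attempt with.

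On the merits of your proposal itself, the setup through $(\star)$ is correct, and your identification of the cherry counts $C_k=\sum_{v\in V_k}d_i(v)d_j(v)$ together with the two global relations
\[
C_1+C_2+C_3\ \ge\ (d_{12}+d_{13}+d_{23}-1)\,|V_1||V_2||V_3|,\qquad
C_k\ \le\ (1-d_{ij})\,|V_1||V_2||V_3|
\]
is accurate. The genuine gap is the step you yourself flag as ``the technically delicate part'': you never specify the weighted selection that is supposed to force some ratio $C_k/(d_{ki}d_{kj}|V_1||V_2||V_3|)$ up to $1$. In fact the linear information you have assembled is \emph{not enough} to do this. Writing $N=|V_1||V_2||V_3|$, one also has $C_i+C_j\le d_{ij}N$ (sum $d_k(u)+d_k(w)\le|V_k|$ over edges $uw$ between $V_i$ and $V_j$), and hence $C_k\ge (d_{ik}+d_{jk}-1)N$; but $d_{ik}+d_{jk}-1\le d_{ik}d_{jk}$ always, so this lower bound never reaches your target. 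Likewise, assuming $C_k<d_{ki}d_{kj}N$ for all three $k$ and combining with the global lower bound only yields $d_{12}d_{13}+d_{12}d_{23}+d_{13}d_{23}>d_{12}+d_{13}+d_{23}-1$, which is true for all $d_{ij}\in[0,1]$ and hence no contradiction. So the ``productive vertex'' you need cannot be extracted from these inequalities alone; an additional idea (as in \cite{BSTT}) is required, and your proposal does not supply it.
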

As a corollary from Lemma~\ref{bondy} and Theorem~\ref{bondy2} we get
\begin{corollary}\label{bondy3}
Let $G=(V_1\cup V_2\cup\ldots \cup V_\ell,E)$ be a balanced $\ell$-partite graph
on $n\ell$ vertices with edge densities $d_{ij}\ge \frac{1}{2}$, which does not contain a triangle.
Then for every independent set $X\subseteq V(G)$, $|X|\le \frac{(\ell+1)n}{2}$.
\end{corollary}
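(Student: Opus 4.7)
Assume $G$ is triangle-free and set $x_i := |X \cap V_i|$. Two ingredients drive the argument.

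First, a per-pair count: for each $i \ne j$, $G[V_i \cup V_j]$ has at least $n^2/2$ edges and none of them lies in $(X\cap V_i)\times(X\cap V_j)$, which leaves at most $n^2 - x_i x_j$ admissible positions. Hence
\[
x_i x_j \le \frac{n^2}{2} \qquad \text{for all } i \ne j.
\]
Second, a structural restriction via Lemma~\ref{bondy}: for any three parts $V_{i_1},V_{i_2},V_{i_3}$ with $x_{i_1},x_{i_2},x_{i_3} \ge n/2$, all six inequalities of the lemma hold (the three densities are $\ge 1/2$ by hypothesis), so triangle-freeness forces at most one of them to be strict. In particular, at most one of $x_{i_1},x_{i_2},x_{i_3}$ strictly exceeds $n/2$. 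This dichotomizes the situation: either at most one part has $x_i>n/2$, or exactly two parts have $x_i\ge n/2$ and both of them satisfy $x_i>n/2$ (a third index with $x=n/2$ together with two strictly heavy ones would give two strict $x$-inequalities in some triple, a contradiction).

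In the first case, the trivial bound $|X| \le n + (\ell-1)\frac{n}{2} = \frac{(\ell+1)n}{2}$ is immediate. In the second, relabel so that $x_1,x_2>n/2$ and $x_i<n/2$ for $i\ge 3$. Combining $x_1 x_2 \le n^2/2$ with $x_1>n/2$ forces $x_2<n$ (and symmetrically $x_1<n$), so $x_1,x_2\in(n/2,n)$. An elementary calculation on the convex function $x_1+\frac{n^2}{2 x_1}$ over the open interval $(n/2,n)$ (where the boundary value $3n/2$ is attained only at the excluded endpoints) yields $x_1+x_2<3n/2$. Therefore
\[
|X| < \frac{3n}{2} + (\ell-2)\cdot\frac{n}{2} = \frac{(\ell+1)n}{2}.
\]

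The main obstacle I anticipate is precisely this last case: the naive bound $x_1+x_2\le 2n$ is too weak by exactly $n/2$, and it is the bipartite product inequality $x_1 x_2 \le n^2/2$ that shaves off this deficit. Lemma~\ref{bondy} does the structural triage (limiting the number of heavy parts to at most two) and the density count finishes the rest; the argument requires no further machinery.
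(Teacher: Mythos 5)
Your proof is correct and follows essentially the route the paper intends (it states Corollary~\ref{bondy3} without proof as a consequence of Lemma~\ref{bondy} and Theorem~\ref{bondy2}): your triple argument is exactly the Lemma~\ref{bondy} application, and your direct count giving $x_ix_j\le n^2/2$ is the independent-set analogue of the bound the paper extracts from Theorem~\ref{bondy2}. The convexity step handling the two-heavy-parts case is sound, so nothing is missing.
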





\begin{proof}[Proof of Theorem~\ref{main}]
Suppose that $G$ contains no
triangle. Without loss of generality we may assume that each of the
$\ell\ge 13$ parts of $G$ contains exactly $n$ vertices, where $n$ is
a sufficiently large even integer. Otherwise, blow up each part by
an appropriate factor, which has no effect on the densities or the
membership in $\mathcal{G}_\ell$, and creates no triangles. 

For a vertex $x$ let $d_i(x)=|N(x)\cap V_i|$.
For each edge $xy\in E(G)$, choose $i$ and $j$ such that
$x\in V_i$ and $y\in V_j$, and let
$$
s(xy):=d(x)-d_j(x)+d(y)-d_i(y).
$$
We have
$$
\sum_{xy\in E(G)}s(xy)= 
\frac{1}{2}\sum_{\substack{x\in V(G)\\y\in N(x)}}s(xy) 
=\sum_{x\in V(G)}\left( d(x)^2-\sum_{j=1}^\ell d_j(x)^2\right) .
$$
The set $N(x)$ is independent, so by Lemma~\ref{bondy}, at most two of
the $d_j(x)$ may be larger than 
$\frac{n}{2}$, and by Lemma~\ref{bondy2}, $d_j(x)d_k(x)\le \frac{1}{2}n^2$ for
every vertex $x\in V_i$ and $j\ne k$. 

Therefore, for fixed $d(x)\ge n$, the
last sum is minimized if $d_j(x)=n$ for one $j$,
$d_j(x)=\frac{n}{2}$ or $d_j(x)=0$ for all but one of the other $j$,
and $0\le d_j(x)\le \frac{n}{2}$ for the last remaining $j$. For $d(x)< n$, the
last sum is non negative. Thus,
\begin{eqnarray*}
\frac{1}{\| G\|}\sum_{xy\in E(G)}s(xy)&\ge& 
\frac{2}{\sum d(x)}
\sum_{\substack{x\in V(G)
}}
(d(x)^2-n^2-(d(x)-n)\tfrac{n}{2})\\
&=& \frac{2\sum d(x)^2}{\sum d(x)}-n-\frac{\ell n^3}{\sum d(x)}\\
&\ge& \frac{2}{\ell n}\sum d(x)-n-\frac{\ell n^3}{\sum d(x)}\\
&\ge& (\ell-2)n -\frac{2n}{\ell-1}.
\end{eqnarray*}
Therefore, there is an edge $xy\in E(G)$ with $s(xy)\ge  (\ell-2)n -\frac{2n}{\ell-1}$. By symmetry, 
we may assume that $x\in V_1$, $y\in V_2$ and $d(x)-d_2(x)\ge d(y)-d_1(y)$. 
Let 
$$
N'(x):= N(x)\setminus V_2,~N'(y):= N(y)\setminus V_1,~\mbox{and}~
W':= \bigcup_{i=3}^\ell V_i\setminus(N(x)\cup N(y)).
$$
Let $G':=G[\bigcup_{i=3}^\ell V_i]$.
Since $N'(x)$ and $N'(y)$ are independent sets, and $|W'|\le \frac{2n}{\ell-1}\le \frac{n}{6}$,
and by Lemma~\ref{bondy} and Theorem~\ref{bondy2}, for fixed $|W'|$
$G'[N'(x)\cup N'(y)]$ has at most as many edges as in the graph we would get if 
$|N(x)\cap V_3|= |N(y)\cap V_4|= n$ and 
$|(W'\cup N'(Y))\cap V_5|=|N(x)\cap V_5|=|N(x)\cap V_i|=|N(y)\cap V_i|=\frac{n}{2}$ for $6\le i\le \ell$,
and all possible edges (i.e., all edges not inside $N(x)$, $N(y)$ or one of the $V_i$) are there.
So,
$$
\| G'[N'(x)\cup N'(y)]\| \le {\ell-2\choose 2}\frac{n^2}{2}+\frac{n^2}{2}-|W'|\frac{\ell-3}{2}n.
$$
Further, by Corollary~\ref{bondy3}, no vertex in $G'$ can have degree larger than $\frac{\ell-2}{2}n$,
so
$$
\| G'\| \le {\ell-2\choose 2}\frac{n^2}{2}+\frac{n^2}{2}+|W'|\frac{n}{2}
\le  {\ell-2\choose 2}\frac{n^2}{2}+\frac{7}{12}n^2.
$$
On the other hand, by the density condition,
$$
\| G'\| \ge  {\ell-2\choose 2}\frac{n^2}{2},
$$
so at most  $\frac{7}{12}n^2$ 
of the possible edges between $N'(x)$ and $N'(y)$ are missing. In particular,
no vertex $z$ can have large neighborhoods in both $N'(x)$ and $N'(y)$, i.e.
$$
(|N(z)\cap N'(x)|-n)|N(z)\cap N'(y)|<\frac{7}{12}n^2 
.
$$
Let
\begin{eqnarray*}
X'&:=&\{ v\in V(G'):|N(v)\cap N'(x)|> \tfrac{1}{2}|N'(x)|\},\\
Y'&:=&\{ v\in V(G'):|N(v)\cap N'(y)|> \tfrac{1}{2}|N'(y)|\},~\mbox{and}\\
Z'&:=&V(G')\setminus(X'\cup Y').
\end{eqnarray*}
If $z\in Z'$, then
$$
d_{G'}(z)\le \frac{|N'(x)|}{2}+\frac{7n^2}{6|N'(x)|-12n}+|W'| 
\le \frac{(\ell-1)n}{4}+\frac{7n}{3\ell-15}+\frac{2n}{\ell-1},
$$
and  for $z\in Z'\setminus W'$,
at least $\frac{|N'(y)|}{2}-n\ge  \frac{\ell-7}{4}n\ge \frac{3}{2}n$ of the missing possible
edges edges between $N'(x)$ and $N'(y)$ are incident to $z$. Therefore,
$|Z'|\le |W'|+      \frac{7}{9}n\le \frac{17}{18}n$.



Again, since $X'$ is an independent set, at most two of the sets $V_i\cap X'$ contain more than 
$\frac{n}{2}$ vertices. We may assume that these sets are contained in $V_3\cup V_4$.
Let $G''=G'\setminus (V_3\cup V_4)$, and $X''$, $Y''$ and $Z''$ 
the according subsets of 
$X'$, $Y'$ and $Z'$.
Then by the density condition,
$$
\| G''\| \ge  {\ell-4\choose 2}\frac{n^2}{2}.
$$
On the other hand,
$
\| G''\| \le |E(X'',Y'')|+|E(Z'',V(G'')|,
$
and $|E(X'',Y'')|$ is maximized for fixed $|Z''|<n$
if $|V_5\cap Y''|=n$ and $|V_i\cap X''|=\frac{n}{2}$ for $6\le i\le \ell$.
Thus,
\begin{eqnarray*}
\| G''\| &\le& \frac{n^2(\ell-5)}{2}+\frac{n(\ell-6)}{2}\left( \frac{n(\ell-5)}{2}-|Z''|\right) 
+|Z''|\left(  \frac{\ell-1}{4}+\frac{7}{3\ell-15}+\frac{2}{\ell-1}\right) n\\
&=&{\ell-4\choose 2}\frac{n^2}{2}+|Z''|\left(  \frac{\ell-1}{4}+\frac{7}{3\ell-15}+\frac{2}{\ell-1}-\frac{\ell-6}{2}\right) n \\
&\le&  {\ell-4\choose 2}\frac{n^2}{2}.
\end{eqnarray*}
Equality is only attained for $Z''=\emptyset$, in which case it is easy to show 
that $G$ is isomorphic to a graph in $ \mathcal{G}^3_\ell$.
\end{proof}




\section{Theorem~\ref{main2}---complete subgraphs}\label{complete}



Graphs which have almost enough edges to force a $K^k$ either contain
a $K^k$ or have a structure very similar to the Tur\'an graph.
This is described by the following theorem
from~\cite{BB}, where a more general version is credited to Erd\"os and
Simonovits. 

\begin{theorem}\label{erd}\cite[Theorem VI.4.2]{BB}
Let $k\ge 3$. Suppose a graph $G$ contains no $K^k$ and
$$
\| G\| =\left( 1-\frac{1}{k-1}+o(1)\right){|G|\choose 2} .
$$
Then $G$ contains a $(k-1)$-partite graph of minimal degree
$(1-\frac{1}{k-1}+o(1))|G|$ as  
an induced subgraph.
\end{theorem}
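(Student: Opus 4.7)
The plan is to first establish the classical edge-stability statement and then upgrade it to an induced subgraph with controlled minimum degree. Write $n = |G|$, and interpret all subsequent error terms as $o(1)$ tending to $0$ with the $o(1)$ in the hypothesis.

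\emph{Stage 1: edge stability.} I would show that $V(G)$ admits a partition $V_1 \cup \cdots \cup V_{k-1}$ differing from a Tur\'an partition by only $o(n^2)$ edges, i.e., the total number of edges inside parts plus missing edges between parts is $o(n^2)$. I would prove this by induction on $k$. For $k = 3$, a triangle-free graph with $(1 + o(1))n^2/4$ edges is $o(n^2)$-close to a balanced complete bipartite graph; this follows from Mantel's theorem combined with the standard inequality $\|G\| \le |N(v)|\,(n - |N(v)|)$ valid for every vertex $v$ of a triangle-free graph. For the inductive step, pick a vertex $v$ of near-maximum degree; its neighbourhood $N(v)$ is $K^{k-1}$-free, and the hypothesized edge count forces $\|G[N(v)]\|$ close to the Tur\'an bound for $K^{k-1}$. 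The inductive hypothesis applied to $G[N(v)]$ yields an almost-$(k-2)$-partition of $N(v)$, and taking $V(G) \setminus N(v)$ as the last part produces the required almost-partition of $V(G)$.

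\emph{Stage 2: extracting the induced subgraph.} Fix a small $\delta \to 0$ and let $U$ be the set of ``typical'' vertices $v$ (with $v \in V_i$) satisfying $|N(v) \cap V_j| \ge (1 - \delta)|V_j|$ for all $j \neq i$ and $|N(v) \cap V_i| \le \delta|V_i|$. The $o(n^2)$ edge defect from Stage 1, amortised over $n$ vertices, forces $|U| \ge (1 - o(1))n$ by a Markov-type count. The crucial claim is that $G[U]$ is genuinely $(k-1)$-partite. Assume otherwise; without loss of generality let $u, v \in V_1 \cap U$ be adjacent. For each $j \ge 2$, the set $U \cap V_j \cap N(u) \cap N(v)$ has size at least $(1 - O(\delta))|V_j|$. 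I would then pick $w_2 \in U \cap V_2 \cap N(u) \cap N(v)$, next $w_3 \in U \cap V_3 \cap N(u) \cap N(v) \cap N(w_2)$, and continue greedily; each step shrinks the candidate set by at most $O(\delta)n$, so a vertex exists at every stage. The resulting clique $\{u, v, w_2, \ldots, w_{k-1}\}$ is a $K^k$ in $G$, contradicting the hypothesis. Hence $G[U]$ has no intra-part edges, so it is $(k-1)$-partite, and every $v \in U$ has degree in $G[U]$ at least $(1 - \tfrac{1}{k-1} - o(1))n$, yielding the desired induced subgraph.

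The main obstacle is Stage 1: the inductive step requires turning the almost-partition of $N(v)$ into one of $V(G)$ without degrading the quantitative bound, which needs careful bookkeeping of how vertices of $V(G) \setminus (N(v) \cup \{v\})$ are distributed among the parts and how many edges are thereby lost. Alternatively, one could sidestep the induction by applying Szemer\'edi's Regularity Lemma and a counting/embedding argument on the reduced graph to force it to be nearly $T_{k-1}$, but that gives a weaker quantitative dependence. The promotion in Stage 2 via the ``typical'' set $U$ is a relatively routine clean-up once Stage 1 is in hand.
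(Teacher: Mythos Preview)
The paper does not prove this statement at all: Theorem~\ref{erd} is quoted verbatim as a known result, attributed to Erd\H{o}s and Simonovits and cited from Bollob\'as's book~\cite[Theorem~VI.4.2]{BB}, and is used as a black box in the proof of Theorem~\ref{main2}. There is therefore nothing in the paper to compare your proposal against.

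That said, your two-stage outline is the standard route to such stability-type results and is essentially how the cited theorem is established. Stage~2 is clean and correct as written. In Stage~1, your base case $k=3$ works once you note that for a vertex $v$ of maximum degree $\Delta$ in a triangle-free graph one has $\|G\| = \sum_{w\notin N(v)} d(w) - \|G[V\setminus N(v)]\|$, so the near-extremal edge count forces $\|G[V\setminus N(v)]\| = o(n^2)$. The inductive step, as you yourself flag, is where the real work lies: one must argue that $\|G[N(v)]\|$ is within $o(|N(v)|^2)$ of the Tur\'an bound for $K^{k-1}$ on $|N(v)|$ vertices, and then control the edges between $V\setminus N(v)$ and the parts of $N(v)$. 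This is doable (and is essentially how the proof in~\cite{BB} proceeds), but your sketch does not yet supply the key counting step that forces $G[N(v)]$ itself to be near-extremal.
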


%

\begin{proof}[Proof of Theorem~\ref{main2}]
For the ease of reading and since we are not trying to
minimize the needed $\ell$, we will use some variables $\ell_i$ and
$c_i>0$. As $\ell$ is chosen larger, 
the $\ell_i$ grow without bound and the $c_i$ approach $0$.

Let $G$ be an $\ell$-partite graph with $V(G)=V_1\cup V_2\cup\ldots \cup V_\ell$ 
with densities $d_{ij}\ge \frac{k-2}{k-1}$, and suppose that $G$ contains no $K^k$.
Without loss of generality we may assume that each of the $V_i$
contains exactly $n$ vertices, where $n$ is
a sufficiently large integer divisible by $k-1$. 

We have
$$
\| G\| \ge \left( 1-\frac{1}{k-1}-\frac{1}{\ell}\right){|G|\choose 2}.
$$
Let $H$ be the $(k-1)$-partite subgraph of $G$ guaranteed by Theorem~\ref{erd}, with 
$V(H)=X_1\cup X_2\cup\ldots\cup X_{k-1}$ and $Z:=V(G)\setminus V(H)$. There is a $c_1>0$
so that $|Z|\le c_1|G|$ (and this $c_1$ becomes arbitrarily small if $\ell$ is chosen large enough). Let
$X_{i,j}:=V_i\cap X_j$ and $Z_i:=V_i\setminus\bigcup_j X_{i,j}$. After renumbering the $V_i$
and the $X_j$, 
we have $|Z_i|\le 2c_1n$ and $|X_{i,1}|\ge |X_{i,2}|\ge\ldots\ge |X_{i,k-1}|$  for
$1\le i\le \ell_1\le \ell$, where $\ell_1\ge\frac{\ell}{2(k-1)!}$ is picked as large as possible. For some $c_2>0$ (with $c_2\to 0$), there is at most 
one index $i\le \ell_1$ with $|X_{i,1}|> \left( \frac{1}{k-1}+c_2\right) n$,
as otherwise there is a pair $(V_i,V_{i'})$ with $d_{ii'}<\frac{k-2}{k-1}$.
So we may assume that
$$
\left( \frac{1}{k-1}-kc_2\right) n\le |X_{i,j}|\le \left( \frac{1}{k-1}+c_2\right) n
$$
for $1\le i\le \ell_1-1$ and $1\le j\le k-1$. This implies that
$$
\|G[X_{i,j},X_{i',j'}]\|>|X_{i,j}||X_{i',j'}|-c_3n^2 
$$
for $i\ne i'$, $j\ne j'$, $1\le i,i'\le \ell_1-1$, $1\le j,j'\le k-1$ and some $c_3>0$ with $c_3\to 0$.

For every $v\in \bigcup_{i\le \ell_1-1}V_i$, find a maximum set $\cP_v$ of pairs $( i_s,j_s)$ with
$(1,1)\le ( i_s,j_s)\le (\ell-1,k-1)$, $i_s\ne i_{s'}$, $j_s\ne j_{s'}$, $|N(v)\cap X_{i_s,j_s})|>c_4n$, where $c_4:= k\sqrt{c_3}$. If there is a vertex $v$ with  $|\cP |=k-1$, then we have a $K^k$. So we may assume this is not the case. Assign $v\in Z$ to one
set $Y_j\supseteq X_j\cap \bigcup_{i\le \ell_1-1}V_i$, if there is no pair $(i,j)$ in $\cP_v$. 
If there is more than one available set, arbitrarily pick one.

Now we reorder the $V_i$ and $Y_j$ again to guarantee that
$|Y_{i,1}|\ge \ldots \ge |Y_{i,k-1}|$ for $1\le i\le \ell_2< \ell_1$, with $\ell_2\ge\frac{\ell_1-1}{(k-1)!}$ as large as possible. 
In the following, only consider indices $i\le \ell_2$.
Note that for $v\in Y_{i,j}$, $|N(v)\cap Y_{i,j'}|<(c_4+2c_1)n$ for all but at most $k-2$ different $j'$, 
as $Y_{i,j'}\setminus X_{i,j'}\subseteq Z_{j'}$.

Let $Y_i'\subseteq Y_i$ the set of all vertices $v\in Y_i$ with $|N(v)\cap Y_j)|<\frac{1}{2}(\frac{1}{k-1}+c_5)\ell_2n$ for some $j\ne i$, $c_5:=c_2+c_4$. 
Note that the sets $Y_i\setminus Y_i'$ are independent, as the intersection of the neighborhoods of every two vertices in this set contain a $K^{k-2}$. Every vertex in $v\in Y_i'\cap V_j$ may have up to $((c_4+2c_1)(\ell_2-k+1)+k-2)n$ neighbors in $Y_i$. But, at the same time, $v$ has at least $|Y_{i'}|-\frac{1}{2}(\frac{1}{k-1}+c_5)\ell_2n-n>\frac{1}{3k}\ell_2n$ non-neighbors in some $Y_{i'}\setminus V_j$, $i'\ne i$. Then
\begin{eqnarray*}
\| G[V_1\cup\ldots V_{\ell_2}]\|
&\le & \sum_{\substack{i\ne i'\\ j<j'}}|Y_{i,j}||Y_{i',j'}| 
+\sum_i |Y_i'|\left( ((c_4+2c_1)(\ell_2-k+1)+k-2)n-\tfrac{1}{3k}\ell_2n\right)\\
&\le& \sum_{\substack{i\ne i'\\ j<j'}}|Y_{i,j}||Y_{i',j'}| 
+\sum_i
|Y_i'|\underbrace{(c_4+2c_1+\tfrac{k}{\ell_2}-\tfrac{1}{3k})}_{<0\mbox{
  for large enough }\ell}\ell_2n\\
&\le & {\ell_2\choose 2}n^2-\sum_{j<j'}|Y_{i,j}||Y_{i,j'}| \\
&\le & {\ell_2\choose 2}\tfrac{k-2}{k-1}~n^2,
\end{eqnarray*}
where equality only holds if $|Y_{i}'|=0$ for all $i$, and $|Y_{i,j}|=\frac{n}{k-1}$ for $1\le j\le k-1$ and all
but at most one index $i$.

This completes the proof of $d^k_\ell=\frac{k-2}{k-1}$ for large enough $\ell$. We are left to analyze the extremal graphs. After reordering, we have $|Y_{i,j}|=\frac{n}{k-1}$ and $d(Y_{i,j},Y_{i',j'})=1$ for $1\le j,j'\le k-1$ and $1\le i,i'\le k$, if $i\ne i'$ and $j\ne j'$. 

Let $v\in V_{i'}$ for some $i'>k$. Then $|N(v)\cap\bigcup_{i\le k}V_i|\le \frac{k(k-2)}{k-1}n$, as otherwise there is a $K^{k-1}$ in $N(v)$. On the other hand, equality must hold for all vertices $v\in V_{i'}$ due to the density condition. Therefore, $N(v)\cap\bigcup_{i\le k}V_i=V_i\setminus Y_j$ for some $1\le j\le k-1$. Define $Y_{i',j}$ accordingly for all $i'>k$, and let $Y_j=\bigcup_i Y_{i,j}$. Then $V=\bigcup  Y_{j}$. For every permutation $\pi$ of the set $\{ 1,\ldots , k-1\}$, there can be at most one set $V_i$ with $|Y_{i,\pi(1)}|\ge |Y_{i,\pi(2)}|\ge\ldots\ge |Y_{i,\pi(k-1)}|$ and $|Y_{i,\pi(1)}|>|Y_{i,\pi(k-1)}|$. Otherwise, this pair of sets would have density smaller than $\frac{k-2}{k-1}$. Thus, all but at most $(k-1)!$ of the $V_i$ have  $|Y_{i,j}|=\frac{n}{k-1}$ for $1\le j\le k-1$. Therefore, all extremal graphs are in $\mathcal{G}^k_\ell$.
\end{proof}

\section{Open problems}

As mentioned above, the characterization of the extremal graphs is not
complete for $k>3$. We need to determine all parameters $n_i^s$ so
that the resulting graphs in $\bar{\mathcal{G}}^k_\ell$ fulfill the
density conditions.

The other obvious question left open is a good bound on $\ell$
depending on $k$ in Theorem~\ref{main2}, and the determination of the exact values of
$d_\ell^k$ for smaller $\ell$. In particular, is it true that
$d_5^3=\frac{1}{2}$?

Another interesting open topic is the behavior of $d_\ell(H)$ for
non-complete $H$. Bondy et al.~\cite{BSTT} show that
$$\lim_{\ell\to\infty}d_\ell(H)=\frac{\chi(H)-2}{\chi(H)-1},$$
but it should be possible to show with similar methods as in this
note that $d_\ell(H)=\frac{\chi(H)-2}{\chi(H)-1}$ for large enough
$\ell$ depending on $H$.

\bibliographystyle{amsplain}

\end{document}